\newtheorem{theorem}{Theorem}
\newtheorem{corollary}[theorem]{Corollary}
\newtheorem{prop}[theorem]{Proposition}
\newtheorem{lemma}[theorem]{Lemma} 
\def\A{\mathcal{A}}
\def\P{\mathcal{P}}
\def\eps{\varepsilon}
\title{A random version of Sperner's theorem}
\author{J\'ozsef Balogh,\footnote{Department of Mathematics, University of Illinois, Urbana, IL 61801, USA and Bolyai Institute, University of Szeged, Szeged, Hungary {\tt jobal@math.uiuc.edu}.
   Research is partially supported by Simons Fellowship, NSF CAREER Grant DMS-0745185, Arnold O. Beckman Research Award (UIUC Campus Research Board 13039) and Marie Curie FP7-PEOPLE-2012-IIF 327763.} 
\ Richard Mycroft\footnote{University of Birmingham, United Kingdom, {\tt r.mycroft@bham.ac.uk}.}
\  and Andrew Treglown\footnote{University of Birmingham, United Kingdom, {\tt a.c.treglown@bham.ac.uk}.}}
\date{}
\begin{document}
 \maketitle
\begin{abstract}
Let $\P(n)$ denote the power set of $[n]$, ordered by inclusion, and let $\P (n,p)$ be obtained from $\P(n)$ by selecting elements from $\P (n)$ independently at random with probability $p$. A classical result of Sperner~\cite{sperner} asserts that every antichain in $\P(n)$ has size at most that of the middle layer, $\binom{n}{\lfloor n/2 \rfloor}$. In this note we prove an analogous result for $\P (n,p)$: If $pn \rightarrow \infty$ then, with high probability, the size of the largest antichain in $\P(n,p)$ is at most $(1+o(1)) p \binom{n}{\lfloor n/2 \rfloor}$. This solves a conjecture of Osthus~\cite{osthus} who proved the result in the case when $pn/\log n \rightarrow \infty$. Our condition on $p$ is best-possible. In fact, we prove a more general result giving an upper bound on the size of the largest antichain for a wider range of values of $p$.
\end{abstract}

We write $[n]$ for the set of natural numbers up to $n$, and $\P(n)$ for the power set of $[n]$. Also, for any $0 \leq k \leq n$ we write $\binom{[n]}{k}$ for the subset of $\P(n)$ consisting of all sets of size $k$. A subset $\A \subseteq \P(n)$ is an \emph{antichain} if for any $A, B \in \A$ with $A \subseteq B$ we have $A = B$. So $\binom{[n]}{k}$ is an antichain for any $0 \leq k \leq n$; Sperner's theorem~\cite{sperner} states that in fact no antichain in $\P(n)$ has size larger than $\binom{n}{\lfloor n/2 \rfloor}$. Our main theorem is a random version of Sperner's theorem. For this, let $\P(n, p)$ be the set obtained from $\P(n)$ by selecting elements randomly with probability $p$ and independently of all other choices. Write $m := \binom{n}{\lfloor n/2 \rfloor}$. Roughly speaking, our main result asserts that if $p> C/n$ for some constant $C$, then with high probability, the largest antichain in $\P(n,p)$ is approximately the same size as the `middle layer' in $\P(n,p)$.

\begin{theorem} \label{main}
For any $\eps > 0$ there exists a constant $C$ such that if $p > C/n$ then with high probability the largest antichain in $\P(n,p)$ has size at most $(1+\eps)pm$.
\end{theorem}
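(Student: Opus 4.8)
The plan is to reduce the problem to the ``middle band'' of $\P(n)$ and then to show that, inside this band, $\P(n,p)$ contains no antichain essentially larger than a single layer.

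\emph{Reductions.} If $p\ge 1/(1+\eps)$ then $m\le(1+\eps)pm$ and Sperner's theorem already finishes the proof, so I may assume $p<1/(1+\eps)$; note that then $pm\ge (C/n)m$ is still exponentially large in $n$. Put $t:=\lceil \sqrt{n}\,\log n\rceil$ and let $\B:=\bigcup_{|k-\lfloor n/2\rfloor|\le t}\binom{[n]}{k}$ be the union of the layers within $t$ of the middle. I would first verify two concentration facts, each holding with high probability: (a) $|\P(n,p)\setminus\B|\le \eps pm/3$, since its mean is $p\sum_{|k-n/2|>t}\binom nk\le p\cdot 2^{n+1}e^{-t^2/(2n)}=o(pm)$ and it is a sum of independent indicators; and (b) $|\P(n,p)\cap\binom{[n]}{k}|\le (1+\eps/3)pm$ for every $k$, by a Chernoff bound for each layer --- this is automatic unless $\binom nk>(1+\eps/3)pm$, in which case the relevant binomial variable has exponentially large mean --- followed by a union bound over the $n+1$ layers. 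By (a) it then suffices to prove that, with high probability, every antichain of $\P(n,p)\cap\B$ has size at most $(1+2\eps/3)pm$.

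\emph{The heart of the argument.} Inside $\B$, an antichain that uses one layer heavily ought to be very sparse in the neighbouring layers: if $\A$ is an antichain and $\A_k:=\A\cap\binom{[n]}{k}$ is not too small, then because $pn\to\infty$ a typical $(k{+}1)$-set has a selected subset lying in $\A_k$ and so cannot itself lie in $\A$; heuristically, iterating this ``shadow'' effect upwards and downwards should leave the layers other than a dominant one contributing only $o(pm)$ in total. Making this precise is the crux, and it cannot be done by a naive union bound: the expected number of antichains of size $(1+\eps)pm$ in $\P(n,p)$ diverges --- already from single-layer antichains --- so the random deletion itself, not mere counting, must be exploited. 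The route I would try first is the container method applied to the comparability graph on $\B$ (whose independent sets are exactly the antichains of $\B$): one wants a small family $\mathcal C$ of subsets of $\B$, each of size at most $(1+\eps/3)m$, with every antichain of $\B$ contained in some $C\in\mathcal C$, whereupon $\Pr[\exists\text{ antichain of size}>(1+2\eps/3)pm]\le\sum_{C\in\mathcal C}\Pr[|\P(n,p)\cap C|>(1+2\eps/3)pm]\le|\mathcal C|\exp(-c\eps^2pm)$, which is $o(1)$ provided $\log|\mathcal C|$ is a small enough multiple of $\eps^2 pm$. The supersaturation input needed is the Sperner version of ``large sets span many comparable pairs'' (by bounds of Kleitman and of Das--Gan--Sudakov, any $\mathcal W\subseteq\B$ with $|\mathcal W|\ge(1+\eps/3)m$ spans $\Omega_\eps(mn)$ comparable pairs), together with the near-regularity of $\P(n)$ about its middle layer so that these pairs are not too clustered. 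Combined with the reductions this yields that, with high probability, the largest antichain of $\P(n,p)$ has size at most $(1+2\eps/3)pm+\eps pm/3\le(1+\eps)pm$.

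\emph{The main obstacle, and scope.} All the difficulty lies in the last step, and precisely in keeping $\log|\mathcal C|$ (equivalently, the fingerprint size in the container construction) small enough when $p$ is as tiny as $C/n$: the supersaturation one has to work with --- $\Theta_\eps(mn)$ comparable pairs, a vanishing fraction of the $\binom{|\mathcal W|}{2}$ potential pairs --- is weak, so no step can afford to be wasteful, and this is exactly the point where a sharper idea than a straightforward container argument is likely required in order to reach the threshold $pn\to\infty$ rather than $pn/\log n\to\infty$. The same calibration (the width of the band, and the block of near-middle layers to which the antichain is compared, both chosen as functions of $p$) is what should also yield the promised more general theorem, with $(1+\eps)pm$ replaced by the size of $p$ times the appropriate union of consecutive near-middle layers. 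Finally $pn\to\infty$ is necessary: when $pn=O(1)$, the set consisting of all selected $\lfloor n/2\rfloor$-sets together with all selected $(\lfloor n/2\rfloor{+}1)$-sets having no selected subset is an antichain of size $(1+\Theta(1))pm$.
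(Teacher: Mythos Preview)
Your framework is exactly the paper's --- containers for the comparability graph, Kleitman-type supersaturation as the input, and a Chernoff/union bound to finish --- but you have left the decisive step open: you yourself flag that a ``sharper idea than a straightforward container argument'' is needed to reach $p\sim C/n$ rather than $p\sim C\log n/n$, and you do not supply it. So as written this is a plan with a named gap, not a proof.

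Concretely, your band reduction only brings the ambient universe down to $|\B|\asymp m\sqrt{n\log n}$, and a one-phase container run inside $\B$ (stopping when the maximum degree drops below $\eps^2 n$, which is what the supersaturation forces) produces fingerprints of size $\asymp |\B|/(\eps^2 n)\asymp m\sqrt{\log n}/(\eps^2\sqrt n)$. Whether you count containers directly or use the refinement that the fingerprint must itself be selected, the resulting entropy is of order $(m\sqrt{\log n}/\sqrt n)\cdot\log(\cdot)$, which swamps $\eps^2 pm$ when $p=C/n$. The paper closes this gap with a \emph{two-phase} container construction run on all of $\P(n)$ (no separate band step): Phase~1 uses a high degree threshold $n^{1.9}$ and a fingerprint $S_1$ of size at most $2^n/n^{1.9}$ to cut the surviving set $f(S_1)$ down to at most $(2+\eps)m$ vertices --- effectively two middle layers, far tighter than any deterministic band --- and then Phase~2 uses threshold $\eps^2 n$ with a second fingerprint $S_2$ \emph{chosen from inside $f(S_1)$}, so $|S_2|\le O(m/(\eps^2 n))$ drawn from a universe of size $O(m)$ rather than $|\B|$. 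That nesting $S_2\subseteq f(S_1)$ is the whole point: it is what makes $\binom{|f(S_1)|}{|S_2|}p^{|S_2|}\le e^{c\eps^2 pm}$ hold at $p=C/n$. Your band-then-containers scheme is a weaker substitute for Phase~1 that does not shrink the universe enough for the Phase~2 arithmetic to go through.
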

(Here, by `with high probability' we mean with probability tending to $1$ as $n$ tends to infinity.) 

The model $\P(n,p)$ was first investigated by R\'enyi~\cite{renyi} who determined the probability threshold for the property that $\P(n,p)$ is not itself an antichain, thereby answering a question of Erd\H{o}s. 
 The size of the largest antichain in $\P(n,p)$ for $p$ above this threshold was first studied by Kohayakawa and Kreuter~\cite{kk}. In~\cite{kk} they raised the question of which values of $p$ does the conclusion of Theorem~\ref{main} hold.
Osthus~\cite{osthus} proved Theorem~\ref{main} in the case when $pn/\log n \rightarrow \infty$ and conjectured that this can be replaced by $pn \rightarrow \infty$. (So Theorem~\ref{main} resolves this conjecture.) Moreover, Osthus showed that, for a fixed $c>0$, if $p=c/n$ then with high probability the largest antichain in $\P(n,p)$ has size at least $(1+o(1))(1+e^{-c/2}) p \binom{n}{\lfloor n/2 \rfloor}$. So the bound on $p$ in Theorem~\ref{main} is best-possible up to the constant $C$. There have also been a number of results concerning the length of (the longest) chains in $\P(n,p)$ and related models of random posets (see
for example, \cite{bol, koh, kre}).

Instead of proving Theorem~\ref{main} directly we prove the following more general result.
\begin{theorem} \label{main2}
Let $n \in \mathbb N$ and $m:=\binom{n}{\lfloor n/2 \rfloor}$.
For any $\eps > 0$  and $t \in \mathbb N$, there exists a constant $C$ such that if $p > C/n^t$ then with high probability the largest antichain in $\P(n,p)$ has size at most $(1+\eps)pmt$.
\end{theorem}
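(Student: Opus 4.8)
The plan is to deduce Theorem~\ref{main2} from Dilworth's theorem: the largest antichain in the induced subposet $\P(n,p)$ has the same size as a smallest cover of its elements by chains of $\P(n)$, so it is enough to show that with high probability $\P(n,p)$ can be covered by at most $(1+\eps)pmt$ chains. Writing $N_k := |\binom{[n]}{k}\cap\P(n,p)|$, I would first discard the selected sets far from the middle: since $\sum_{k\,:\,|k-n/2|>\sqrt n\log n}\binom nk \le 2^{-\omega(1)}\cdot 2^n$ by Hoeffding's inequality, a first-moment bound shows that with high probability only $o(pm)$ selected sets lie outside the band $B$ of the $O(\sqrt n\log n)$ central levels, and these can be covered one chain apiece. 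For $k\in B$ the mean $p\binom nk$ is still enormous, so by a Chernoff bound and a union bound over the $O(\sqrt n\log n)$ relevant levels we may assume $N_k=(1+o(1))p\binom nk$ for all $k\in B$; in particular $\max_k N_k=(1+o(1))pm$ and $\sum_{k\in I}N_k=(1+o(1))pmt$ for any $t$ consecutive levels $I$ near the middle.

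It remains to cover the selected sets in $B$ by $(1+\eps/2)pmt$ chains, and here I would proceed greedily. Sweep the levels of $B$ from the bottom upwards, maintaining a set of ``open'' chains each tagged by its current top element. On reaching level $k$, take a largest matching between the selected sets at level $k$ and the tags of open chains lying $\Theta(t)$ levels below, where a tag $h$ may be matched to a set $x$ precisely when $h\subset x$; extend the matched chains by these new sets and open a fresh chain for each unmatched selected set at level $k$. The number of chains produced is $\sum_k\big(N_k-\mu_k\big)$, where $\mu_k$ is the size of the matching found at level $k$. The point of sweeping through the middle is that the $N_k$ increase to their maximum at the central level and then decrease; if each matching is almost perfect on its smaller side, then past the middle $\mu_k\approx N_k$ so no new chains appear, while below the middle $N_k-\mu_k\approx N_k-N_{k-\Theta(t)}$, and summing these telescoping contributions over the $\approx t$ residue classes modulo the (roughly constant) jump length gives a total of about $t\cdot\max_k N_k=(1+o(1))pmt$.

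The heart of the argument — and the step I expect to be the main obstacle — is to make precise that the matchings are almost perfect on the smaller side and that the accumulated error stays below $\eps pmt$, and it is exactly here that the hypothesis $p>C/n^t$ is used. A set $h$ of size about $n/2$ lying $t+1$ levels below level $k$ has $\binom{n-|h|}{t+1}\asymp n^{t+1}$ supersets at level $k$, so its expected number of \emph{selected} such supersets is of order $p\,n^{t+1}\to\infty$ once $p>C/n^t$; symmetrically, each selected set at level $k$ has a divergent expected number of selected subsets among the candidate tags. Thus the bipartite containment graph used at each level has all degrees tending to infinity, and a Hall-type computation — verifying the deficiency condition for \emph{every} vertex subset, just as in showing that sufficiently dense random bipartite graphs have matchings saturating the smaller side — produces an almost-saturating matching with high probability, the lower-order defect being superpolynomially small and hence summable over the $\Theta(\sqrt n\log n)$ levels to $o(pm)$. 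The two remaining delicate points are (i) choosing the window width — essentially $t$, but with ``backup'' matches one level deeper to kill the rare defects — so that the telescoped count lands on $t\cdot\max_k N_k$ rather than $(t+1)\cdot\max_k N_k$, and (ii) controlling how the occasional deeper jumps perturb that count; the case $t=1$ (windows of a single level, where the nearest-level containment degree is only a constant) is the most delicate, and is precisely the regime in which Osthus needed the stronger hypothesis $pn/\log n\to\infty$.
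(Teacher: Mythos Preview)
Your plan is completely different from the paper's: the paper never touches Dilworth or matchings, but instead builds \emph{containers} for the independent sets of the comparability graph $G$. A two-phase greedy fingerprint (Lemma~\ref{containers}), with Kleitman's edge-isoperimetric inequality (Corollary~\ref{kleitmancoro}) supplying the density input, shows that every antichain lies in some set $S_1\cup S_2\cup g(S_1\cup S_2)$ with $|g(S_1\cup S_2)|\le(t+\eps)m$ and with only about $\binom{2^n}{2^n/n^{t+0.9}}\binom{(t+2)m}{(t+2)m/(\eps^2 n^t)}$ choices for the pair $(S_1,S_2)$; a Chernoff bound on $|g(S_1\cup S_2)\cap\P(n,p)|$ together with a union bound over those choices finishes. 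The constant $C$ enters only so that the Chernoff tail beats the union bound, and no matching analysis at bounded degree is ever needed.

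Your route, by contrast, runs straight into such an analysis, and as written it does not get through. To make the telescope land on $t\cdot\max_k N_k$ rather than $(t{+}1)\cdot\max_k N_k$ you are forced to take the primary jump length equal to $t$, and then the bipartite containment graph between selected sets $t$ levels apart has expected degree $p\binom{\lceil n/2\rceil}{t}\asymp p\,n^t\asymp C$, a \emph{bounded} constant: a positive proportion of each side is isolated, so the primary matching leaves a $\Theta(1)$-fraction defect at every one of $\Theta(\sqrt n\log n)$ levels. Your proposed fix is a backup match one level deeper, where degrees do diverge, but the two sides of that backup graph are the \emph{leftovers} of the primary matching --- not fresh random subsets --- so the ``Hall-type computation'' you invoke does not apply without a genuinely new ingredient (for instance a deterministic expansion statement for the $(t{+}1)$-step shadow that survives restriction to adversarial subsets, or a careful exposure argument decoupling the two stages). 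You yourself flag this step as ``precisely the regime in which Osthus needed the stronger hypothesis $pn/\log n\to\infty$''; that sentence is an acknowledgment of the gap rather than a resolution of it, and it is exactly the barrier the paper's container argument is designed to bypass.
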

Osthus~\cite{osthus} proved this result in the case when $p(n/t)^t /\log n \rightarrow \infty$. (In fact, Osthus's result allows for $t$ to be an integer function, see~\cite{osthus} for the precise statement.) Moreover, Osthus showed that, for 
$1/n^{t} \ll p \ll 1/n^{t-1}$, with high probability, $\P(n,p)$ has an antichain of size at least $(1+o(1))pmt$ (so Theorem~\ref{main2} is `tight' in this window of $p$).

The method of proof of Theorem~\ref{main2} also allows us to estimate the number of antichains in $\P(n)$ of certain fixed sizes.

\begin{prop} \label{countantichains}
Fix any $t \in \mathbb{N}$, and suppose that $m/n^t \ll s \ll m/n^{t-1}$. Then the number of antichains of size $s$ in $\P(n)$ is $\binom{(t+o(1))m}{s}$.
\end{prop}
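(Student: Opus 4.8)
The plan is to prove matching upper and lower bounds for $N_s$, the number of antichains of size $s$ in $\P(n)$, and then conclude $N_s=\binom{(t+o(1))m}{s}$.

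\emph{Upper bound.} I would deduce $N_s\le\binom{(t+o(1))m}{s}$ from Theorem~\ref{main2} by a first moment argument. Given $\delta>0$, set $\eps:=\delta/(3t)$, let $C=C(\eps,t)$ be the constant supplied by Theorem~\ref{main2}, and put $p:=s/((1+2\eps)mt)$. Since $s\gg m/n^t$ we have $pn^t\to\infty$, so $p>C/n^t$ for large $n$; moreover $(1+\eps)pmt<s$, so by Theorem~\ref{main2} with high probability $\P(n,p)$ contains no antichain of size $s$. On the other hand the probability that $\P(n,p)$ contains a given $s$-element antichain of $\P(n)$ is exactly $p^s$, so by the union bound the probability that $\P(n,p)$ contains \emph{some} $s$-element antichain is at most $N_sp^s$. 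Comparing the two estimates gives $N_sp^s\to0$, hence for large $n$ we get $N_s<p^{-s}=\bigl((1+2\eps)tm/s\bigr)^s\le\bigl((t+\delta)m/s\bigr)^s\le\binom{(t+\delta)m}{s}$, using $\binom{N}{s}\ge(N/s)^s$. As $\delta>0$ was arbitrary this gives the claimed upper bound.

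\emph{Lower bound.} Here I would exhibit enough antichains directly. Let $\mathcal{L}$ be the union of the $t$ layers $\binom{[n]}{\lfloor n/2\rfloor-i}$ for $0\le i\le t-1$; since $t$ is fixed, each of these has size $(1-o(1))m$. For each composition $s=a_1+\dots+a_t$ (with $a_j\ge0$), build an antichain greedily, working from the smallest of these layers up to the largest and choosing $a_j$ sets from the $j$-th layer: at each step every previously chosen set is contained in at most $\binom{n}{t-1}\le n^{t-1}$ sets of the current layer and there are at most $s$ such sets, so at least $(1-o(1))m-sn^{t-1}$ sets of the current layer remain available; by the hypothesis $s\ll m/n^{t-1}$ this is $(1-\delta_n)m$ for some $\delta_n\to0$ not depending on the composition. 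Hence each composition yields at least $\prod_{j=1}^t\binom{(1-\delta_n)m}{a_j}$ distinct antichains (distinct across compositions because the vector of layer sizes differs), and summing over all compositions and applying the Vandermonde identity,
\[
N_s\ \ge\ \sum_{a_1+\dots+a_t=s}\ \prod_{j=1}^t\binom{(1-\delta_n)m}{a_j}\ =\ \binom{t(1-\delta_n)m}{s}\ =\ \binom{(t-o(1))m}{s}.
\]

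Combining the two bounds yields $N_s=\binom{(t+o(1))m}{s}$. The substantive input is Theorem~\ref{main2}; beyond that, the only points requiring care are the bookkeeping of the $o(1)$ terms and checking that the two hypotheses on $s$ enter in the expected places — $s\gg m/n^t$ to guarantee $p>C/n^t$ in the upper bound, and $s\ll m/n^{t-1}$ to keep the greedy procedure from exhausting a layer in the lower bound. I do not expect any genuine obstacle here.
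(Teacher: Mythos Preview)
Your lower bound is fine and is essentially the paper's greedy argument (the Vandermonde step is a nice way to package the count).

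The upper bound, however, has a genuine gap: the union bound points the wrong way. From Theorem~\ref{main2} you know that the probability $P$ that $\P(n,p)$ contains an antichain of size $s$ tends to~$0$, and from the union bound you know $P \le N_s p^s$. These two facts together say nothing about $N_s p^s$; you would need a \emph{lower} bound on $P$ in terms of $N_s$ to conclude that $N_s$ is small. (Equivalently: if $X$ counts $s$-element antichains contained in $\P(n,p)$, then $\mathbb{E}[X]=N_s p^s$ and Markov gives $P\le \mathbb{E}[X]$; knowing $P=o(1)$ does not bound $\mathbb{E}[X]$.) Turning this into a valid argument would require a second-moment or Janson-type lower bound on $P$, which is not straightforward here.

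The paper avoids this by not going through Theorem~\ref{main2} at all: it applies the container lemma (Lemma~\ref{containers}) directly. Every antichain of size $s$ is determined by a choice of $S_1$ (at most $\binom{2^n}{\le 2^n/n^{t+0.9}}$ options), then $S_2\subseteq f(S_1)$ (at most $\binom{(t+1+\eps)m}{\le (t+2)m/(\eps^2 n^t)}$ options), and finally the remaining $s-|S_1|-|S_2|$ elements inside $g(S_1\cup S_2)$, a set of size at most $(t+\eps)m$. Summing these products --- using $s\gg m/n^t$ to ensure the first two factors are negligible compared with $\binom{(t+\eps)m}{s}$ --- yields $N_s\le\binom{(t+2\eps)m}{s}$ for large $n$. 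So the counting statement really uses the container structure itself, not just its probabilistic consequence.
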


To prove Theorem~\ref{main2}, let $G$ be the graph with vertex set $\P(n)$ in which distinct sets $A$ and $B$ are adjacent if $A \subseteq B$ or $B \subseteq A$. Then an antichain in $\P(n)$ is precisely an independent set in $G$. We follow the `hypergraph container' approach (see, for example, \cite{container1, container2}): indeed, we show that all independent sets in $G$ are contained within a fairly small number of low-density sets in $G$. Crucially, for this method to work, we have to construct our `containers' in two phases (see Lemma~\ref{containers}).
For this we use a result of Kleitman~\cite{kleitman} on the minimum number of edges induced by a subset of $G$ with a given fixed size. Define the \emph{centrality order} on the vertices of $\P(n)$ as follows: we begin with the elements of $\binom{[n]}{\lfloor n/2 \rfloor}$, ordered arbitrarily, then the elements of $\binom{[n]}{\lfloor n/2 \rfloor + 1}$, then the elements of $\binom{[n]}{\lfloor n/2 \rfloor -1}$, then the elements of $\binom{[n]}{\lfloor n/2 \rfloor +2}$, and so forth until all vertices of $\P(n)$ have been ordered. For any $r \in \mathbb{N}$ let $I_r$ denote the initial segment of this order of length $r$; Kleitman~\cite{kleitman} proved that $I_r$ minimises the number of induced edges over all sets of size $r$ (see also~\cite{das}, which characterises all the sets $U$ of size $r$ for which $e(G[U])$ is minimised).

\begin{theorem}[Kleitman~\cite{kleitman}] \label{kleitman}
For any $r \leq 2^n$ and any $U \subseteq V(G)$ of size $r$ we have $e(G[U]) \geq e(G[I_r])$.
\end{theorem}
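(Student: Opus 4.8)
\noindent\emph{Sketch of a proof.}
The plan is to prove Theorem~\ref{kleitman} by Kleitman's original compression argument~\cite{kleitman}. For $U\subseteq V(G)$ write $a_k:=|U\cap\binom{[n]}{k}|$ for its \emph{profile}. Since each layer $\binom{[n]}{k}$ is an antichain, $G$ has no edge inside a layer, so $e(G[U])=\sum_{k<k'}c_{k,k'}(U)$, where $c_{k,k'}(U)$ is the number of pairs $(A,B)$ with $A\in U\cap\binom{[n]}{k}$, $B\in U\cap\binom{[n]}{k'}$ and $A\subseteq B$. Fix a set $U$ of size $r$ minimising $e(G[U])$; I would show in two stages that $U$ may be taken equal to $I_r$.

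\emph{Stage 1 (compression).} The first step is a sequence of exchange operations, none of which increases $e(G[U])$, driving $U$ into the following shape: the nonempty layers of $U$ form an interval $\{\ell,\ell+1,\dots,u\}$ with $\ell\le\lfloor n/2\rfloor\le u$, every interior layer is \emph{saturated} (that is, $a_k=\binom{n}{k}$ for $\ell<k<u$), and within each of the layers $\ell$ and $u$ the set $U$ is an initial segment in colexicographic order. The colex step invokes the Kruskal--Katona theorem: replacing a layer-part by a colex initial segment of the same size does not enlarge its shadow, hence does not increase the number of comparable pairs it makes with the rest of $U$. The steps that shift $U$ onto a contiguous central interval and saturate the interior rest on the normalised matching property of the Boolean lattice --- for $X\subseteq\binom{[n]}{k}$ the upper shadow satisfies $|\partial^{+}X|/\binom{n}{k+1}\ge|X|/\binom{n}{k}$, and symmetrically below --- which guarantees that redistributing elements toward fuller, more central layers creates no new comparable pairs.

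\emph{Stage 2 (profile optimisation).} After Stage~1, $e(G[U])$ is essentially explicit. A fixed $m$-set has exactly $\binom{n-m}{k-m}$ supersets of size $k>m$ and exactly $\binom{m}{k}$ subsets of size $k<m$, so the contribution of a partial layer to comparabilities with any \emph{full} layer depends only on the sizes $a_\ell,a_u$ and not on which sets were chosen; the one term that depends on the choice of sets, $c_{\ell,u}(U)$, is pinned down as a function of $(a_\ell,a_u)$ by the colex-compression. It then remains to minimise this explicit quantity over $\ell\le\lfloor n/2\rfloor\le u$ and over $a_\ell,a_u$ subject to $\sum_k a_k=r$. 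One checks that it is a convex function of the profile whose marginal cost for the next set added is, by normalised matching, no larger in a bigger layer than in a smaller one; hence the profile is optimised by filling layers greedily in order of decreasing size, so one end layer is left empty and the layers used are the most central ones --- that is, $U=I_r$, which is the assertion of the theorem.

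\emph{Main obstacle.} The crux is that adjacency in $G$ is full comparability in $\P(n)$, not the covering relation, so a low-rank set is adjacent to sets on many far-away layers; this long-range interaction is exactly what breaks a naive induction on $n$ (splitting $\P(n)$ according to whether $n$ is present leaves a cross term counting comparable pairs between the two halves, which pulls against centrality) and forces the detour through Stage~1, whose purpose is to reduce to the case in which every layer but two is saturated, so that the long-range terms become closed-form. Even then, the delicate point is to exhibit a family of exchange operations each of which \emph{simultaneously} fails to increase every cross-layer term at once, and it is here --- together with Kruskal--Katona for handling the two partial layers --- that the normalised matching property of the Boolean lattice carries the argument.
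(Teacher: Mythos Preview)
The paper does not actually prove Theorem~\ref{kleitman}: it is quoted from Kleitman~\cite{kleitman} and applied as a black box (only through Corollary~\ref{kleitmancoro}). So there is no in-paper argument to compare your attempt against.

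On the merits of your sketch: the overall plan --- compress to a configuration with saturated interior layers, then optimise the profile --- has the right shape, but the colex step in Stage~1 is wrong as stated. You claim that replacing a layer-part by a colex initial segment ``does not increase the number of comparable pairs it makes with the rest of $U$'' because Kruskal--Katona says the shadow does not grow. That inference fails: the size of the shadow controls how many \emph{distinct} sets on a lower level are hit, not the number of comparable pairs with a \emph{fixed} family sitting on another partial layer. Concretely, take $n=4$ with $U\cap\binom{[4]}{1}=\{\{3\},\{4\}\}$ and $U\cap\binom{[4]}{3}=\{\{1,2,3\},\{1,2,4\}\}$; here $c_{1,3}=2$, but colex-compressing level~$1$ to $\{\{1\},\{2\}\}$ raises $c_{1,3}$ to $4$ (and symmetrically, colex-compressing level~$3$ with level~$1$ already colex does the same). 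Colex initial segments on two different layers are maximally \emph{aligned} --- both concentrate on small ground-set elements --- so compressing both to colex tends to \emph{increase} $c_{\ell,u}$, not decrease it. Since your Stage~2 then optimises the explicit formula obtained \emph{after} this compression, the argument as written does not establish that $I_r$ is the minimiser. The standard repair is either to anti-align the two boundary layers (compress one with respect to the usual order and the other with respect to the reversed ground set, so that the relevant iterated shadows really do bound $c_{\ell,u}$ from below), or --- closer to Kleitman's own proof --- to drop the colex step entirely and argue directly by single-element exchanges between the two boundary layers, using normalised matching to show one can always shift an element toward the more central partial layer without creating new comparabilities.
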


We apply this theorem in the form of the following corollary.

\begin{corollary} \label{kleitmancoro}
Let $U \subseteq V(G)$, and suppose that $0<\eps \leq 1/2$ and $t \in \mathbb N$. If $|U|\ge (t+\eps)m$, then $e(G[U])> \eps n^t |U|/(2t)^{t+1}$.
\end{corollary}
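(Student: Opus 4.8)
The plan is to reduce the statement to Kleitman's theorem (Theorem~\ref{kleitman}) and then establish a uniform lower bound on the degrees into the ``central block''. Writing $r := |U| \ge (t+\eps)m$, Theorem~\ref{kleitman} gives $e(G[U]) \ge e(G[I_r])$, where $I_r$ is the initial segment of the centrality order of length $r$, so it suffices to bound $e(G[I_r])$ from below. I would let $W$ be the union of the first $t$ layers in the centrality order; by the definition of that order, $W = \bigcup_{k = \lfloor n/2\rfloor - a}^{\lfloor n/2\rfloor + b}\binom{[n]}{k}$ for suitable $a,b\ge 0$ with $a+b+1 = t$, i.e.\ $W$ is a block of $t$ consecutive layers. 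Each layer has size at most $m$, so $|W| \le tm < r$; hence $W \subseteq I_r$, and moreover
\[
|I_r \setminus W| \;=\; r - |W| \;\ge\; r - tm \;\ge\; \Bigl(1 - \tfrac{t}{t+\eps}\Bigr)r \;=\; \tfrac{\eps}{t+\eps}\,r ,
\]
using $r \ge (t+\eps)m$ in the last step.

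The key step — and the one I expect to be the main obstacle, since the corollary is tight up to the constant — is to show that \emph{every} vertex $v \in V(G)\setminus W$ satisfies $\deg_W(v) \ge (n/(2t))^t$. Here I would argue as follows. Say $v \in \binom{[n]}{k}$. If $k > \lfloor n/2\rfloor + b$, then all $t$ layers making up $W$ lie strictly below $k$, so $v$ has $\binom{k}{j}$ neighbours in $\binom{[n]}{j}$ for each $\lfloor n/2\rfloor - a \le j \le \lfloor n/2\rfloor + b$; keeping only the term $j = \lfloor n/2\rfloor - a$ and noting that $\binom{k}{\lfloor n/2\rfloor - a}$ is nondecreasing in $k$ while $k \ge \lfloor n/2\rfloor + b + 1$, I obtain
\[
\deg_W(v) \;\ge\; \binom{\lfloor n/2\rfloor + b + 1}{\lfloor n/2\rfloor - a} \;=\; \binom{\lfloor n/2\rfloor + b + 1}{t} \;\ge\; \binom{\lfloor n/2\rfloor + 1}{t},
\]
the first equality because the two lower indices differ by $a+b+1 = t$. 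The case $k < \lfloor n/2\rfloor - a$ is symmetric, with supersets in place of subsets and $n-k$ in place of $k$, and gives $\deg_W(v) \ge \binom{\lceil n/2\rceil + a + 1}{t} \ge \binom{\lfloor n/2\rfloor + 1}{t}$. Finally, using $\binom{N}{t} \ge (N/t)^t$ for integers $N \ge t$ together with $\lfloor n/2\rfloor + 1 > n/2$ yields $\deg_W(v) > (n/(2t))^t$. (One may assume $\lfloor n/2\rfloor + 1 \ge t$ here: otherwise $tm > 2^n \ge |U|$, so the hypothesis is vacuous — and for the finitely many $n$ where the asymptotic estimate $m \sim 2^n\sqrt{2/\pi n}$ is too crude this is checked by hand.)

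To conclude, all the edges I count next lie in $G[I_r]$, so
\[
e(G[I_r]) \;\ge\; e(I_r\setminus W,\, W) \;=\; \sum_{v \in I_r\setminus W}\deg_W(v) \;\ge\; |I_r\setminus W|\Bigl(\tfrac{n}{2t}\Bigr)^t \;\ge\; \tfrac{\eps}{t+\eps}\cdot\tfrac{n^t}{(2t)^t}\cdot r \;>\; \tfrac{\eps\, n^t\, r}{(2t)^{t+1}},
\]
where the final strict inequality is just $t + \eps < 2t$ (true since $\eps \le 1/2 < 1 \le t$). By Kleitman's theorem this bounds $e(G[U])$ from below as required. Apart from the degree estimate, the only thing to watch is that this chain of inequalities leaves essentially no slack, so the constants and the behaviour near the extreme layers must be handled carefully, but no new idea is needed beyond Theorem~\ref{kleitman} and the elementary binomial inequality above.
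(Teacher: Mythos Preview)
Your proof is correct and follows essentially the same route as the paper: reduce to $I_r$ via Kleitman, observe that at least $r-tm$ vertices of $I_r$ lie outside the $t$ central layers, bound the degree of each such vertex into those layers by $(n/(2t))^t$, and multiply. The only cosmetic differences are that the paper writes $r-tm \ge 2\eps r/(1+2t)$ (using $\eps\le 1/2$) where you use the sharper $r-tm\ge \eps r/(t+\eps)$, and the paper states the degree bound $\binom{\lceil n/2\rceil}{t}\ge (n/2t)^t$ without your case analysis; your extra care with the two cases and with small $n$ versus large $t$ is not needed for the paper's applications but does no harm.
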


\begin{proof}
Let $r := |U|$. We have $r \geq (t+\eps)m$, so in particular $r-mt  \geq  r (1-t/(t+\eps )) \geq 2\eps r/(1+2t)$ since $\eps \leq 1/2$. Observe that $I_r$ contains all of the at most $mt$ elements of the $t$ `middle layers', $\binom{[n]}{\lfloor n/2 \rfloor}$, $\binom{[n]}{\lfloor n/2 \rfloor+1}$, and so forth. Further, $I_r$ contains at least $r-mt$ elements from outside these layers, each of which has at least $\binom{\lceil n/2 \rceil}{t} \geq (n/2t)^t$ neighbours in  the $t$ middle layers. So by Theorem~\ref{kleitman} we have 
\begin{equation*}
e(G[U]) \geq e(G[I_r]) \geq \frac{2\eps r}{1+2t} \cdot \left( \frac{n}{2t}\right)^t \geq \frac{\eps n^t r}{(2t)^{t+1}}. \qedhere
\end{equation*}
\end{proof}

Let $s \in \mathbb N$, $t >0$ and let $S$ be a set of size $|S|=s$. Define $\binom{S}{\leq t}$ to be the set of all subsets of $S$ of size at most $t$ and $\binom{s}{\leq t}:=\big |\binom{S}{\leq t} \big |$.
%We can now prove the result we need on independent sets, using the following algorithm.

\begin{lemma} \label{containers} 
Suppose that $t \in \mathbb N$,  $0 < \eps \leq 1/(2t)^{t+1}$ and $n$ is sufficiently large. Then there exist functions $f : \binom{V(G)}{\leq n^{-(t+0.9)}2^n} \to \binom{V(G)}{\leq (t+1+\eps)m}$ and $g : \binom{V(G)}{\leq (t+2)m/(\eps ^2 n^t)} \to \binom{V(G)}{\leq (t+\eps)m}$ such that, for any independent set $I$ in $G$, there are disjoint subsets $S_1, S_2 \subseteq I$ with
$S_1 \in \binom{V(G)}{\leq n^{-(t+0.9)}2^n} $, $S_2 \in \binom{V(G)}{\leq (t+2)m/(\eps ^2 n^t)}$
such that  $S_1 \cup S_2$ and $g(S_1 \cup S_2)$ are disjoint, $S_2 \subseteq f(S_1)$, and $I \subseteq S_1 \cup S_2 \cup g(S_1 \cup S_2)$. 
\end{lemma}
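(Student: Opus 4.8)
The plan is to run the Kleitman--Winston ``graph container'' algorithm twice: first to pass from $V(G)$ down to a set of size roughly $(t+1)m$, and then, working inside that set, down to a set of size roughly $tm$. Fix once and for all an arbitrary linear order on $V(G)$, to be used only for breaking ties.

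First I would set up the algorithm and its two standard properties. Given an induced subgraph $G[W]$, a threshold $\beta$, and an independent set $J\subseteq W$, repeatedly take the vertex $v$ of maximum degree in the current graph (ties broken by the fixed order): if $v\in J$ put $v$ into a \emph{fingerprint} set $S$ and delete $v$ together with all its current neighbours; if $v\notin J$ simply delete $v$. Stop once the remaining set $A$ has $|A|<\beta$. Property (a): $J\subseteq S\cup A$ and $S\cap A=\emptyset$, since an element of $J$ is never deleted as a neighbour of a fingerprint vertex (as $J$ is independent) nor in a ``$v\notin J$'' step, so it is selected into $S$ or survives in $A$. Property (b): $A$ depends only on $W,\beta$ and $S$ --- reconstruct by rerunning the process with the test ``$v\in J$?'' replaced by ``$v\in S$?''; as $S$ is exactly the set of vertices the original run selected, at the step $v$ is examined one has $v\in J$ iff $v\in S$ (forward: a selected $J$-vertex enters $S$; backward: $S\subseteq J$), so the reconstruction stays in lock-step with the original run, including the moment it stops. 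For the size of $S$ I would use Corollary~\ref{kleitmancoro}: if $\beta\ge(t'+\delta)m$ with $0<\delta\le 1/2$ then every set of size at least $\beta$ has average degree more than $2\delta n^{t'}/(2t')^{t'+1}$, so each fingerprint step deletes more than this many vertices, whence $|S|<|W|(2t')^{t'+1}/(2\delta n^{t'})$.

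Now apply this twice. In Phase~1, take $W=V(G)$ and $\beta_1=(t+1+\eps/2)m$, using Corollary~\ref{kleitmancoro} with $t'=t+1$, $\delta=\eps/2$; this gives $S_1\subseteq I$ with $|S_1|<2^n(2(t+1))^{t+2}/(\eps n^{t+1})\le n^{-(t+0.9)}2^n$ for $n$ large (the $n^{0.1}$ of slack absorbing the constant), together with a leftover $A_1$, $S_1\cap A_1=\emptyset$, $|A_1|<\beta_1$. Put $f(S_1):=S_1\cup A_1$; since $n^{-(t+0.9)}2^n=o(m)$ this has size at most $(t+1+\eps)m$, and $I\subseteq S_1\cup A_1=f(S_1)$. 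In Phase~2, let $W_1:=f(S_1)$, note $W_1\setminus S_1=A_1$, and run the algorithm inside $G[A_1]$ with threshold $\beta_2=(t+\eps)m$ on the independent set $I\setminus S_1$, using Corollary~\ref{kleitmancoro} with $t'=t$, $\delta=\eps$. This gives $S_2\subseteq I\setminus S_1$ (so $S_1\cap S_2=\emptyset$ and $S_2\subseteq A_1=W_1\setminus S_1\subseteq f(S_1)$) with $|S_2|<(t+2)m(2t)^{t+1}/(2\eps n^t)$, and a leftover $A_2$ with $A_2\subseteq A_1$, $A_2\cap S_2=\emptyset$, $|A_2|<(t+\eps)m$. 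Here the hypothesis $\eps\le(2t)^{-(t+1)}$ forces $|S_2|\le\tfrac12(t+2)m/(\eps^2 n^t)$, and since $n^{-(t+0.9)}2^n=o(m/n^t)$ we get $|S_1\cup S_2|\le(t+2)m/(\eps^2 n^t)$ for $n$ large. Finally $I\setminus S_1\subseteq S_2\cup A_2$, hence $I\subseteq S_1\cup S_2\cup A_2$, and $A_2$ is disjoint from $S_1\cup S_2$ (it avoids $S_1$ as $A_2\subseteq A_1$, and avoids $S_2$ as Phase~2 fingerprint vertices are deleted from the Phase~2 remaining set).

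It remains to define the functions, and here lies the one genuinely delicate point, which is also why the two phases must be interlocked: $g$ is handed only the \emph{union} $Y:=S_1\cup S_2$, not the pair, yet it must produce $A_2$, which seems to need separate knowledge of $S_1$ (to rebuild $W_1$), of $A_1$, and of $S_2$. The key claim I would prove is that running the Phase~1 reconstruction on $Y$ already returns $A_1$: by induction the reconstructed remaining set stays in step with the true Phase~1 run, the only way it could deviate being that some vertex of $S_2$ is met as a maximum-degree vertex before the process stops --- but that vertex lies in $I$, so the true Phase~1 run would then have selected it into $S_1$, contradicting $S_1\cap S_2=\emptyset$. So from $Y$ one recovers $A_1$, then $S_1=Y\setminus A_1$ and $S_2=Y\cap A_1$ (using $S_1\cap A_1=\emptyset$ and $S_2\subseteq A_1$), then $W_1=S_1\cup A_1$, and then $A_2$ by reconstructing the Phase~2 run inside $G[A_1]$ from $S_2$; set $g(Y):=A_2$. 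On inputs not arising in this way, define $f$ and $g$ arbitrarily, subject only to the stated codomains. I expect this reconstruction-from-the-union step to be the main obstacle; everything else is routine checking of the displayed size inequalities, whose only nontrivial inputs are $m=\Theta(2^n/\sqrt n)$ and ``$n$ sufficiently large''.
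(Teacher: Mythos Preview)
Your proposal is correct and follows essentially the same two-phase Kleitman--Winston container strategy as the paper: the only notable difference is that the paper terminates each phase when the \emph{maximum degree} of the current graph drops below a threshold ($n^{t+0.9}$ and $\eps^2 n^t$ respectively), whereas you terminate when the \emph{size} of the remaining set drops below $(t+1+\eps/2)m$ and $(t+\eps)m$; Corollary~\ref{kleitmancoro} converts between these two criteria, so the arithmetic comes out the same. Your explicit reconstruction-from-the-union argument for the well-definedness of $g$ is exactly the content of the paper's one-line claim that $S_1\cup S_2=S_1'\cup S_2'$ forces $S_1=S_1'$.
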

Roughly speaking, Lemma~\ref{containers} ensures that every independent set $I$ in $G$ lies in some (not too big) sparse `container' set $S_1 \cup S_2 \cup g(S_1 \cup S_2)$, and in total we do not have `too many' containers. Indeed, since $S_1$ and $S_2$ are small sets, there are not too many possibilities for the set $S_1 \cup S_2$, which in turn means there are not too many containers $S_1 \cup S_2 \cup g(S_1 \cup S_2)$ to consider. This property is crucial to the proof of Theorem~\ref{main2}, as it enables us to take a union bound to show that it is unlikely that the number of vertices randomly selected from any container is significantly higher than expected. 
\begin{proof}[Proof of Lemma~\ref{containers}]
Fix an arbitrary total order $v_1, \dots, v_n$ on the vertices of $V(G)$. Given any independent set $I$ in $G$, define $G_0 := G$, and take $S_1$ and $S_2$ to be initially empty. We add vertices to $S_1$ and $S_2$  through the following iterative process, beginning at Step 1 in Phase 1.

\emph{Phase 1:} At Step $i$, let $u$ be the maximum degree vertex of $G_{i-1}$ (with ties broken by our fixed total order). If $u \notin I$ then define $G_i := G_{i-1} \setminus \{u\}$, and proceed to Step $i+1$ (still in Phase 1). Alternatively, if $u \in I$ and $\deg_{G_{i-1}}(u) \geq n^{t+0.9}$ then add $u$ to $S_1$, define $G_i := G_{i-1} \setminus (\{u\} \cup N_G(u))$, and proceed to Step $i+1$ (still in Phase 1). Finally, if $u \in I$ and $\deg_{G_{i-1}}(u) < n^{t+0.9}$, then add $u$ to $S_1$, define $G_i := G_{i-1} \setminus \{u\}$ and $f(S_1) := V(G_i)$, and proceed to Step $i+1$ of Phase 2.

\emph{Phase 2:} At Step $i$, let $u$ be the maximum degree vertex of $G_{i-1}$. If $u \notin I$ then define $G_i := G_{i-1} \setminus \{u\}$, and proceed to Step $i+1$ (still in Phase 2). Alternatively, if $u \in I$ and $\deg_{G_{i-1}}(u) \geq \eps ^2 n^t$ then add $u$ to $S_2$, define $G_i := G_{i-1} \setminus (\{u\} \cup N_G(u))$, and proceed to Step $i+1$ (still in Phase 2). Finally, if $u \in I$ and $\deg_{G_{i-1}}(u) < \eps ^2 n^t$, then add $u$ to $S_2$, define $G_i := G_{i-1} \setminus \{u\}$ and $g(S_1 \cup S_2):=V(G_i)$, and terminate.

\medskip

Observe first that for any independent set $I$ in $G$ the process defined ensures that $S_1$ and $S_2$ are disjoint subsets of $I$, that $S_1 \cup S_2$ is disjoint from $g(S_1 \cup S_2)$, that $S_2 \subseteq f(S_1)$ and that $I \subseteq S_1 \cup S_2 \cup g(S_1 \cup S_2)$. 

Next, note that for any independent set $I$, if a vertex $u$ is added to $S_1$ at step $i$, $u$ and at least $n^{t +0.9}$ neighbours of $u$ are deleted from $G_{i-1}$ in forming $G_i$, with a single exception (when $u$ is the final vertex added to $S_1$). So we must have $|S_1| \leq 1+ |V(G)|/(n^{t+0.9}+1) \leq n^{-(t+0.9)}2^n$. Furthermore, at the end of Phase 1 we know that every vertex $v$ of $G_i$ has $\deg_{G_i}(v) \leq n^{t+0.9}$, and so Corollary~\ref{kleitmancoro} implies that $f(S_1)$, the set of all vertices not deleted up to this point, must have size $|f(S_1)| < (t+1+\eps)m$. Then, in Phase 2, if a vertex $u$ is added to $S_2$ at step $i$, at least $\eps ^2 n^t$ neighbours of $u$ are deleted from $G_{i-1}$ in forming $G_i$, again with the single exception of the final vertex added to $S_2$. So we must have $|S_2| \leq 1+|f(S_1)|/(\eps ^2 n^t) $ and thus $$|S_1 \cup S_2| \leq 1+(t+1+\eps )m/(\eps ^2 n^t) + n^{-(t+0.9)}2^n \leq (t+2) m/(\eps ^2 n^t).$$ Moreover, at the end of Phase 2 every vertex $v$ of the final $G_i$ has $\deg_{G_i}(v) \leq \eps ^2 n ^t$ and so $e(G_i)\leq \eps ^2 n^t |G_i| \leq \eps n^t |G_i|/(2t)^{t+1}$. Thus,
Corollary \ref{kleitmancoro} implies that $|g(S_1 \cup S_2)| \leq (t+\eps)m$.

So it is sufficient to check that the functions $f$ and $g$ are well-defined. That is, we must check that if the process described above yields the same set $S_1$ when applied to independent sets $I$ and $I'$, then it should also yield the same set $f(S_1)$, and if additionally the same set $S_2$ is returned then the sets $g(S_1 \cup S_2)$ should be identical. However, this is a consequence of the fact that we always chose $u$ to be the vertex of $I$ of maximum degree in $G_{i-1}$. Moreover, if our algorithm produces sets $S_1 , S_2$ for an independent set $I$ and  sets $S'_1 , S'_2$ for an independent set $I'$ such that $S_1 \cup S_2 =
S'_1 \cup S'_2$ then $S_1=S'_1$ (and $S_2=S'_2$). %This is important.
 Thus, indeed $f$ and $g$ are well-defined.
\end{proof} 

The reason for using a two-phase algorithm in the proof of Lemma~\ref{containers} is that the structure of the hypercube graph is locally highly asymmetric; even worse, the size of the targeted independent set $I$ is very small compared to the number of vertices in the graph. Roughly speaking, the main objective of Phase 1 (where in each step many vertices are removed) is to decrease the number of potential vertices of $I$ sufficiently for the standard `hypergraph container' approach of Phase 2 to be successful.

%Since $G$ is far from regular, we needed to use two steps in the proof of Lemma~\ref{containers}. Indeed, the role of Step~1 was to conclude that not only is $S_1$ small but that also $f(S_1)$ is not too big (%this was achieved by applying Corollary~\ref{kleitmancoro}).
%This in turn ensured that, in Step 2, we constructed $S_2 \subseteq f(S_1)$ to be sufficiently small.

\begin{proof}[Proof of Theorem~\ref{main2}]
Fix $\eps > 0$ and $t \in \mathbb N$; we may assume that $\eps < 1/(2t)^{t+1}$. Define $C:=10^{10} \eps ^{-5}$ and $\eps _1 := \eps/4$. Let $G_p$ be the graph formed from $G$ by selecting vertices independently at random with probability $p>C/n^t$. Then we must show that, with high probability, $G_p$ has no independent set of size greater than $(1+\eps)pmt$. 
Apply Lemma~\ref{containers} with $\eps _1$ playing the role of $\eps$.
Suppose for a contradiction that $G_p$ does contain some independent set $I$ with $|I| > (1+\eps)pmt$.
 Then all vertices of the sets $S_1$ and $S_2$ given by Lemma~\ref{containers} for this $I$ must have been selected for $G_p$, along with at least $|I| - |S_1 \cup S_2| \geq (1+\eps)pmt -  (t+2)m/(\eps _1 ^2 n^t)\geq (1+\eps/2)pmt$  vertices of $g(S_1 \cup S_2)$ (the second inequality follows from $C=10^{10} \eps ^{-5}$). 

However, the number of possibilities for $S_1$ is $\binom{2^n}{\leq n^{-(t+0.9)} 2^n}$, and for each possibility the probability that $S_1 \subseteq V(G_p)$ is $p^{|S_1|}$. For any fixed $S_1$ we have $|f(S_1)| \leq (t+2)m$ and $S_2 \subseteq f(S_1)$, so the number of possibilities for $S_2$ is at most $\binom{(t+2)m}{\leq (t+2)m/(\eps _1 ^2 n^t)}$, and for each possibility the probability that $S_2 \subseteq V(G_p)$ is $p^{|S_2|}$. Finally, for any fixed $S_1$ and $S_2$ we have $g(S_1 \cup S_2) \leq (t+\eps_1) m \leq (1+\eps/4)mt$, so the expected number of vertices of $g(S_1 \cup S_2)$ selected for $G_p$ is at most $(1+\eps/4)pmt$. By a standard Chernoff bound the probability that at least $(1+\eps/2)pmt$ vertices of $g(S_1 \cup S_2)$ are selected for $G_p$ is therefore at most $e^{-\eps^2pmt/100}$. Taking a union bound, we conclude that the probability that $G_p$ contains an independent set~$I$ of size greater than $(1+\eps) pmt
$ is at most  
\begin{align*}
\Pi &:= \sum _{0\leq a \leq n^{-(t+0.9)} 2^n}  \ \sum _{0\leq b \leq (t+2)m/(\eps _1 ^2 n^t) } \binom{2^n}{a}\cdot p^{a}\cdot \binom{(t+2)m}{b}\cdot p^{b} \cdot  
e^{-\eps^2 pmt/100} \\
&
\leq (n^{-(t+0.9)} 2^n +1) ((t+2)m/(\eps _1 ^2 n^t) +1) \binom{2^n}{n^{-(t+0.9)} 2^n} \cdot p^{n^{-(t+0.9)} 2^n}
\binom{(t+2)m}{(t+2)m/(\eps _1 ^2 n^t)}\\ & \ \ \cdot  p^{(t+2)m/(\eps _1 ^2 n^t)} \cdot 
e^{-\eps^2 pmt/100}.
\end{align*}
Note that for large $n$, with plenty of room to spare we have
$$(n^{-(t+0.9)} 2^n +1) ((t+2)m/(\eps _1 ^2 n^t) +1) \leq e^{\eps^2 pmt/400}$$ and
$$\binom{2^n}{n^{-(t+0.9)} 2^n} \cdot p^{n^{-(t+0.9)} 2^n} \leq e^{\eps^2 pmt/400}.$$
Further, since $C=10^{10} \eps ^{-5}$, for large $n$ we have that
$$\binom{(t+2)m}{(t+2)m/(\eps _1 ^2 n^t)} \cdot  p^{(t+2)m/(\eps _1 ^2 n^t)} \leq e^{\eps^2 pmt/400}.$$
Thus, the upper bound $\Pi$ on the probability is $o(1)$.
\end{proof} 

We conclude with a sketch of the proof of Proposition~\ref{countantichains}, on the number of antichains of given fixed sizes in $\P(n)$.

\begin{proof}[Proof sketch of Proposition~\ref{countantichains}]
The lower bound can be obtained by greedily choosing vertices from within the $t$ middle layers of $\P(n)$ to form an antichain of size $s$, and counting the number of ways to make these choices. For the upper bound, fix any $\eps > 0$ and apply Lemma~\ref{containers} with this $\eps$ and $t$. Then any independent set in $G$ of size $s$ is uniquely determined by the choice of 
\begin{enumerate}
\item a set $S_1$ of size $s_1 \leq \ell_1 := 2^n/n^{t+0.9}$, for which there are at most $\binom{2^n}{\leq \ell_1}$ choices,
\item a set $S_2 \subseteq f(S_1)$ of size $s_2 \leq \ell_2 := (t+2)m/(\eps^2n^t)$, for which there are at most $\binom{(t+1+\eps)m}{\leq \ell_2}$ choices, and
\item a set $S \subseteq g(S_1 \cup S_2)$ of size $s - s_1 - s_2$, for which there are at most $\binom{(t+\eps)m}{s-s_1-s_2}$ choices.
\end{enumerate}
Summing over all these choices by a similar calculation as in the proof of Theorem~\ref{main2}, we find that (for large $n$) there are at most $\binom{(t+2\eps)m}{s}$ independent sets of size $s$ in $G$.
\end{proof}

When we completed the project, we were informed that Collares Neto and Morris~\cite{cm} independently proved Theorem~\ref{main}. Their method is however different. We used the proof technique of \cite{container1}, and they followed the method of \cite{container2}. In particular, when we constructed containers, we aimed at having few vertices, whilst they aimed at having only few edges.

\section*{Acknowledgements}
The authors are grateful to the strategic alliance between the University of Birmingham and the University of Illinois at Urbana-Champaign. Much of the research for this paper was carried out during visits in connection with this partnership. The authors are also grateful to Deryk Osthus for a discussion on~\cite{osthus} and to the referees for their quick and careful reviews.


\begin{thebibliography}{10}
\bibitem{container1} J. Balogh, R. Morris and W. Samotij,   Independent sets in hypergraphs, \emph{J. Amer. Math. Soc.}, to appear. Preprint available at arXiv:1204.6530.
\bibitem{bol} B. Bollob\'as and G. Brightwell, The height of a random partial order: Concentration of measure, \emph{Ann. Appl. Probab.}~{\bf 2} (1992), 1009--1018.
\bibitem{cm} M. Collares Neto and R. Morris, Maximum antichains in random set, preprint.
\bibitem{das} S. Das, W. Gan and B. Sudakov, Sperner's Theorem and a Problem of Erd\H{o}s, Katona and Kleitman, \emph{Combin. Probab. Comput.}, to appear. Preprint available at arXiv:1302.5210.
\bibitem{kleitman} D. Kleitman, A conjecture of Erd\H{o}s--Katona on commensurable pairs among subsets of an $n$-set,
\emph{Theory of Graphs, Proc. Colloq., Tihany} (1966), 215--218.
\bibitem{kk} Y. Kohayakawa, B. Kreuter, The width of random subsets of Boolean lattices, \emph{J. Combin. Theory A}~{\bf{100}}  (2002), 376--386.
\bibitem{koh} Y. Kohayakawa, B. Kreuter, D. Osthus, The length of random subsets of Boolean lattices, \emph{Random Struct. Algorithms}~{\bf 16} (2000), 177--194.
\bibitem{kre} B. Kreuter, Small sublattices in random subsets of Boolean lattices, \emph{Random Struct. Algorithms}~{\bf 13} (1998), 383--407.
\bibitem{osthus} D. Osthus, Maximum antichains in random subsets of a finite set, \emph{J. Combin. Theory A}~{\bf{90}}  (2000), 336--346.
\bibitem{renyi} A. R\'enyi, On random subsets of a finite set, \emph{Mathematica (Cluj)} {\bf 3}, No. 26 (1961), 355--362.
\bibitem{container2} D. Saxton and A. Thomason, Hypergraph containers, {arXiv:1204.6595}.
\bibitem{sperner} E. Sperner, Ein Satz \"uber Untermengen einer endlichen Menge, \emph{Math. Zeit.} {\bf27} (1928) 544--548.
\end{thebibliography}
\end{document}